\newcommand\Op{-\Delta^{\Omega^\ext}_\aa}
\newcommand\OpB{-\Delta^{B_R^\ext}_\aa}
\newcommand\Ev{\lm_1^\aa(\Omega^\ext)}
\newcommand\EvB{\lm_1^\aa(B^\ext_R)}
\definecolor{citegreen}{rgb}{0.2,0.2,0.6}
\newcommand{\beq}{\begin{equation} \begin{split}}
\newcommand{\eeq}{\end{split} \end{equation}}
\renewcommand\and{\qquad\text{and}\qquad}
\newcommand{\comm}[1]{}
\def\sfH{\mathsf{H}}
\def\bm1{\mathbbm{1}}
\def\s{\sigma}
\def\sfh{\mathsf{h}}
\def\p{\partial}
\def\omg{\omega}
\renewcommand{\iff}{\textit{if, and only if,}\,}
\def\arr{\rightarrow}
\def\s{\sigma}
\def\sess{\sigma_{\rm ess}}
\def\p{\partial}
\def\kp{\kappa}
\def\sfH{\mathsf{H}}
\def\sfh{\mathsf{h}}
\def\dd{{\mathsf{d}}}
\def\omg{\omega}
\def\sfU{\mathsf{U}}
\newcounter{counter_a}
\newenvironment{myenum}{\begin{list}{{\rm(\roman{counter_a})}}%
{\usecounter{counter_a}
\setlength{\itemsep}{1.ex}\setlength{\topsep}{0.8ex}
\setlength{\leftmargin}{5ex}\setlength{\labelwidth}{5ex}}}{\end{list}}
\numberwithin{figure}{section}
\numberwithin{equation}{section}
\theoremstyle{plain}% default
\newtheorem*{thm*}{Theorem}
\newtheorem{thm}{Theorem}
\newtheorem{prop}[thm]{Proposition}
\newtheorem{cor}[thm]{Corollary}
\theoremstyle{remark}
\theoremstyle{plain}
\newcommand{\beu}{\begin{equation*}}
\newcommand{\eeu}{\end{equation*}}
\newcommand{\besu}{\begin{equation*}
\begin{aligned}}
\newcommand{\eesu}{\end{aligned}
\end{equation*}}
\newcommand{\bes}{\begin{equation}
\begin{aligned}}
\newcommand{\ees}{\end{aligned}
\end{equation}}
\newcommand\cH{\mathcal H}
\newcommand\cL{\mathcal L}
\newcommand\cM{\mathcal M}
\newcommand\ov{\overline}
\newcommand\void[1]{}
\def\ov{\overline}
      \def\dC{{\mathbb C}}
   \def\dN{{\mathbb N}}   
      \def\dR{{\mathbb R}}
   \def\sfH{{\mathsf H}}
      \def\sfU{{\mathsf U}}
   \def\cH{{\mathcal H}}   
      \def\cL{{\mathcal L}}
\def\cM{{\mathcal M}}
   \def\cW{{\mathcal W}}
\newcommand{\lm}{\lambda}
\newcommand{\ie}{\emph{i.e.}}
\newcommand{\eg}{\emph{e.g.}}
\newcommand{\cf}{\emph{cf.}}
\newcommand{\Real}{\mathbb{R}}
\newcommand{\Dom}{\mathsf{D}}
\newcommand{\der}{\mathrm{d}}
\renewcommand{\aa}{\alpha}
\newcommand\ext{\mathrm{ext}}
\theoremstyle{definition}
\definecolor{DarkGreen}{rgb}{0,0.5,0.1} % David
\newcommand\soutD{\bgroup\markoverwith
{\textcolor{DarkGreen}{\rule[.5ex]{2pt}{1pt}}}\ULon}
\newcommand{\dint}[2][]{\displaystyle{\int_{#2}^{#1}}}
\newcommand{\vast}{\bBigg@{3}}
\newcommand{\Vast}{\bBigg@{5}}
\begin{document}
%
%-------%
% TITLE %
%-------%
%------------------------------------------%
%------------------------------------------%
\title[Optimisation of the Robin eigenvalue]{Optimisation of the lowest Robin eigenvalue in the exterior of a compact set, II: non-convex domains and higher dimensions}
\author{David Krej\v{c}i\v{r}{\'\i}k} 
\address{Department of Mathematics, Faculty of Nuclear Sciences and 
	Physical Engineering, Czech Technical University in Prague, Trojanova 13, 12000 Prague 2,
	Czech Republic}
\email{david.krejcirik@fjfi.cvut.cz}

\author{Vladimir Lotoreichik}
\address{Department of Theoretical Physics, Nuclear Physics Institute, 
	Czech Academy of Sciences, 25068 \v Re\v z, Czech Republic}
\email{lotoreichik@ujf.cas.cz}

\begin{abstract}
We consider the problem of geometric optimisation of the lowest eigenvalue
of the Laplacian in the exterior of a compact set 
in any dimension,
subject to attractive Robin boundary conditions.

As an improvement upon our previous work~\cite{KL1},
we show that under either a constraint of fixed perimeter or area, 
the maximiser within the class of exteriors of simply connected planar sets
is always the exterior of a disk, without the need of convexity assumption.
Moreover, we generalise the result to 
disconnected compact planar sets.
Namely, we prove that under a constraint of fixed average value of the perimeter over all the connected components,
the maximiser within the class
of disconnected compact planar sets, consisting of finitely many simply connected components, is again a disk.

In higher dimensions, we prove a completely new result
that the lowest point in the spectrum
is maximised by the exterior of a ball among all sets exterior to bounded convex sets satisfying a constraint on the integral of a dimensional power of the mean curvature of their boundaries. Furthermore, it follows that the critical coupling at which the lowest point in the spectrum becomes a discrete eigenvalue emerging from the essential spectrum is minimised 
under the same constraint by the critical coupling for the exterior of a ball.

\end{abstract}
\date{7 July 2017}

\keywords{Robin Laplacian, negative boundary parameter,
exterior of a compact set, lowest eigenvalue, spectral isoperimetric inequality, spectral isochoric inequality, parallel coordinates, 
critical coupling, Willmore energy}
\subjclass[2010]{35P15 (primary); 58J50 (secondary)} 

\maketitle

%---------------------%
\section{Introduction}
%---------------------%

\subsection{Motivation and state of the art}
Spectral optimisation problems constitute an intensively studied area of modern mathematics. 
In addition to important applications in physics,
the attractiveness is certainly caused by the emotional 
impacts geometric shapes have over a person's perception of the world.
Moreover, the problems are typically easy to state
but difficult to solve, leading thus to mathematically challenging interaction of differential geometry, operator theory, and partial differential equations.	
We refer to Henrot's monographs~\cite{Henrot, Henrot2} 
for many results, open problems in this area of mathematics and further references.

In this paper we are concerned 
with the optimisation of the lowest point $\lambda_1^\alpha(\Omega)$ of the spectrum
of the Robin Laplacian that is variationally characterised by the formula
\begin{equation}\label{Rayleigh}
  \lambda_1^\alpha(\Omega)
  := \inf_{\stackrel[u\not=0]{}{u \in W^{1,2}(\Omega)}}
  \frac{\displaystyle \int_\Omega |\nabla u|^2 + \alpha \int_{\partial\Omega} |u|^2}
  {\displaystyle \, \int_{\Omega} |u|^2}
  \,.
\end{equation}
We are interested in extremal properties of $\lambda_1^\alpha(\Omega)$
as regards the geometry of the smooth open set $\Omega \subset \Real^d$
and the value of the real parameter~$\alpha$. 

If~$\Omega$ is bounded,
then the infimum in~\eqref{Rayleigh} is achieved 
and $\lambda_1^\alpha(\Omega)$ represents the lowest
eigenvalue of the Robin Laplacian. 
For all positive~$\alpha$, it is then known that
$\lambda_1^\alpha(\Omega)$ is minimised 
by the ball among all domains of given volume 
\cite{Bossel_1986,Daners_2006,Daners_2013}.
For negative~$\alpha$, 
where it makes sense to look at a reverse optimisation,
it was conjectured by Bareket in 1977 \cite{Bareket_1977}
that $\lambda_1^\alpha(\Omega)$ is now maximised
by the ball among all domains of given volume 
(isochoric problem).
This conjecture has been recently disproved by Freitas 
and one of the present authors \cite{FK7}
by showing that spherical shells
give larger values of $\lambda_1^\alpha(\Omega)$
for all sufficiently large negative~$\alpha$.
In the two-dimensional situation, however,
it is true that $\lambda_1^\alpha(\Omega)$ is maximised
by the disk among all planar domains of given area
provided that~$\alpha$ is negative and small \cite{FK7}.
Moreover, numerical simulations suggest \cite{AFK}
that the conjecture actually does hold for all negative~$\alpha$
among the class of simply connected planar domains, 
but the proof constitutes a challenging open problem.
Finally, it was shown in \cite{AFK} that, for all negative~$\alpha$,
the eigenvalue $\lambda_1^\alpha(\Omega)$ 
is maximised by the disk among all planar domains of given perimeter
(isoperimetric problem).
 
The question of spectral optimisation 
is also natural to ask for unbounded sets. 
In our preceding paper~\cite{KL1},  we considered
optimisation of $\Ev$
with $\Omega^\ext:= \dR^d\setminus\ov{\Omega}$
being the exterior of a bounded open set~$\Omega$.
The main result of~\cite{KL1} reads as follows:
\setcounter{thm}{-1}
\begin{thm}[{\cite[Thm.~1]{KL1}}]\label{thm0}
Let $d=2$.
	For all negative~$\alpha$, we have
	\begin{flalign*}%\label{results}
		&&
		\max_{\stackrel[\Omega~\text{\rm convex}]{}{|\p\Omega| = c_1}}\Ev	
		= 
		\lm_1^\aa(B_{R_1}^\ext)
		\qquad \mbox{and} \qquad
		\max_{\stackrel[\Omega~\text{\rm convex}]{}{|\Omega| = c_2}}\Ev	
		= 
		\lm_1^\aa(B_{R_2}^\ext)
	\,.
	&&
	\end{flalign*}
	Here the maxima are taken over all convex, smooth, bounded
	planar open sets~$\Omega$ 
	of a given perimeter~$c_1 > 0$ or area~$c_2 > 0$, respectively,
	and~$B_{R_1}$ and~$B_{R_2}$ are disks of perimeter 
	$|\p B_{R_1}| = c_1$ and area $|B_{R_2}| = c_2$.
\end{thm}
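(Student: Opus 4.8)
The plan is to reduce both optimisation problems to a single one-parameter family of one-dimensional weighted problems by means of parallel (tubular) coordinates in the exterior, absorbing all of the geometry into a universal weight and thereby exploiting convexity crucially. First I would set up parallel coordinates: for a convex smooth bounded $\Omega\subset\Real^2$ with boundary parametrised by arc length $\gamma(s)$ and outward unit normal $\nu(s)$, the map $\Phi(s,t)=\gamma(s)+t\,\nu(s)$ is a diffeomorphism of $\partial\Omega\times(0,\infty)$ onto $\Omega^\ext$ — and this \emph{global} injectivity is precisely where convexity is indispensable, since for non-convex sets the parallel curves develop a cut locus. Its Jacobian is $1+t\,\kappa(s)$ with $\kappa\ge0$ the nonnegative curvature, so the length of the parallel curve at distance $t$ is $\int_{\partial\Omega}(1+t\kappa)\,\der s=|\partial\Omega|+2\pi t$, the last step being the Gauss--Bonnet/turning-tangent identity $\int_{\partial\Omega}\kappa\,\der s=2\pi$. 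The key observation is that this weight depends on $\Omega$ only through $|\partial\Omega|$ and coincides with the weight for a disk of the same perimeter.

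Next I would transplant. Given an exponentially decaying profile $\psi$ on $(0,\infty)$, the function $u:=\psi\circ\dist(\cdot,\Omega)$ lies in $W^{1,2}(\Omega^\ext)$ with $|\nabla u|=|\psi'|$ because $|\nabla\dist|=1$. By the coarea formula and the length identity above,
\begin{equation*}
\int_{\Omega^\ext}|\nabla u|^2 + \aa\int_{\partial\Omega}|u|^2
= \int_0^\infty \psi'(t)^2\,(|\partial\Omega|+2\pi t)\,\der t + \aa\,|\partial\Omega|\,\psi(0)^2,
\end{equation*}
and likewise $\int_{\Omega^\ext}|u|^2=\int_0^\infty\psi(t)^2\,(|\partial\Omega|+2\pi t)\,\der t$. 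Defining for $\ell>0$ the reduced quantity
\begin{equation*}
\mu(\ell):=\inf_{\psi}\frac{\displaystyle\int_0^\infty \psi'^2\,(\ell+2\pi t)\,\der t + \aa\,\ell\,\psi(0)^2}{\displaystyle\int_0^\infty\psi^2\,(\ell+2\pi t)\,\der t},
\end{equation*}
the variational principle yields $\Ev\le\mu(|\partial\Omega|)$. For a disk $B_R$ the same computation is an \emph{identity}, because its ground state is radial and its parallel weight is again $2\pi R+2\pi t$; hence $\lm_1^\aa(B_R^\ext)=\mu(2\pi R)$.

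The perimeter case is then immediate: if $|\partial\Omega|=c_1=|\partial B_{R_1}|=2\pi R_1$, then $\Ev\le\mu(2\pi R_1)=\lm_1^\aa(B_{R_1}^\ext)$, with equality for $\Omega=B_{R_1}$, so the disk is the maximiser. For the area case I would invoke two further inputs. The classical isoperimetric inequality gives $|\partial\Omega|\ge|\partial B_{R_2}|=2\pi R_2$ whenever $|\Omega|=|B_{R_2}|$. Granting the monotonicity that $\ell\mapsto\mu(\ell)$ is nonincreasing — equivalently that $R\mapsto\lm_1^\aa(B_R^\ext)$ is nonincreasing — one concludes $\Ev\le\mu(|\partial\Omega|)\le\mu(2\pi R_2)=\lm_1^\aa(B_{R_2}^\ext)$, again with equality for the disk.

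The main obstacle is exactly this monotonicity; the geometry having been fully absorbed into the universal weight $\ell+2\pi t$, it is the only genuinely analytic step left. I expect to establish it from the explicit radial reduction: the ground state of $B_R^\ext$ is $K_0(\kappa r)$ with $\lm_1^\aa(B_R^\ext)=-\kappa^2$, where $\kappa>0$ solves $\kappa\,K_1(\kappa R)/K_0(\kappa R)=-\aa$. Differentiating implicitly and using the standard facts that $z\mapsto K_1(z)/K_0(z)$ is positive, exceeds one, and is decreasing, one finds $\der\kappa/\der R>0$, so that $\lm_1^\aa(B_R^\ext)=-\kappa^2$ decreases in $R$. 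A route avoiding special functions is the scaling identity $\lm_1^\aa(B_R^\ext)=R^{-2}\lm_1^{\aa R}(B_1^\ext)$ combined with a direct variational comparison for $\mu(\ell)$; either way the argument is confined to this one-dimensional problem.
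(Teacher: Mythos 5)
Your proposal is correct and follows essentially the same route as the paper (and its predecessor \cite{KL1}): parallel/distance coordinates made global by convexity, the weight $|\p\Omega|+2\pi t$ via Gauss--Bonnet, transplantation of radial test profiles to reduce to a one-dimensional weighted problem that is an identity for the disk, and, for the area constraint, the classical isoperimetric inequality combined with the strict monotonicity of $R\mapsto\lm_1^\aa(B_R^\ext)$ (the paper's \eqref{monotonicity}, proved in \cite{KL1} by essentially the Bessel-function analysis you sketch). No gaps.
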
	
Hence, contrary to the bounded setting,
the exterior of the disk is the maximiser not only for the isoperimetric 
but also for the isochoric optimisation problem,
at least among the class of exteriors of convex sets.
The restriction to negative~$\alpha$ 
in Theorem~\ref{thm0} is due to the fact
that $\lambda_1^\alpha(\Omega^\ext)=0$ for all $\alpha$~positive
and any bounded~$\Omega$, so the optimisation problems
are not interesting for positive~$\alpha$.
In fact (\cf~\cite[Prop.~1]{KL1}), 
the whole interval $[0,\infty)$ belongs to the essential spectrum
of the Robin Laplacian in the exterior of any compact set, 
for any~$\alpha$, and there is no other spectrum if~$\alpha$ is positive.  
On the other hand, for every bounded non-empty~$\Omega$ there exists
a non-positive constant $\alpha_*(\Omega^\ext)$ such that  
$\lambda_1^\alpha(\Omega^\ext)$ is a negative discrete eigenvalue
if, and only if,
$\alpha < \alpha_*(\Omega^\ext)$.
\begin{prop}\label{Prop.critical}
Let $\Omega \subset \Real^d$ be an arbitrary 
non-empty smooth bounded open set.	
Then
$$
\begin{cases}
  \alpha_*(\Omega^\ext) = 0 
  \quad\mbox{if}\quad 
  d=1,2 \,,
  \\
  \alpha_*(\Omega^\ext) < 0 
  \quad\mbox{if}\quad 
  d \geq 3 \,.
\end{cases}
$$
\end{prop}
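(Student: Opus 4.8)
The plan is to recast the proposition as a statement about a single trace inequality and to read off the dichotomy from the dimension. Since $[0,\infty)$ is the essential spectrum, we have $\lambda_1^\alpha(\Omega^\ext)\le 0$ for every $\alpha$, and a negative discrete eigenvalue exists precisely when $\lambda_1^\alpha(\Omega^\ext)<0$, i.e.\ when the form in~\eqref{Rayleigh} takes a negative value. For $\alpha<0$ this means $\int_{\Omega^\ext}|\nabla u|^2<|\alpha|\int_{\partial\Omega}|u|^2$ for some $u\in W^{1,2}(\Omega^\ext)$. Hence, introducing
$$
C_\Omega := \sup_{u\in W^{1,2}(\Omega^\ext),\ u\neq 0}
\frac{\int_{\partial\Omega}|u|^2}{\int_{\Omega^\ext}|\nabla u|^2}\,,
$$
one checks at once that $\alpha_*(\Omega^\ext)=-1/C_\Omega$, with the convention $-1/\infty=0$. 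The whole statement thus reduces to showing that $C_\Omega=\infty$ for $d\in\{1,2\}$ and $C_\Omega<\infty$ for $d\ge 3$.

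For $d\in\{1,2\}$ I would exhibit explicit trial functions that make the quotient blow up; this is the manifestation of the vanishing capacity of a compact set in low dimensions. Fix $\rho$ with $\overline{\Omega}\subset B_\rho$ and, for $d=2$, take $u_R$ equal to $1$ on $\overline{B_\rho}$, equal to $\log(R/|x|)/\log(R/\rho)$ on the annulus $\rho<|x|<R$, and $0$ outside $B_R$. Then $\int_{\partial\Omega}|u_R|^2=|\partial\Omega|$ is fixed while $\int_{\Omega^\ext}|\nabla u_R|^2=2\pi/\log(R/\rho)\to 0$ as $R\to\infty$, so $C_\Omega=\infty$. For $d=1$ the same effect is produced by a piecewise linear cut-off decaying over a length $L\to\infty$ along each half-line. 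Consequently $\lambda_1^\alpha(\Omega^\ext)<0$ for every $\alpha<0$, which together with $\alpha_*(\Omega^\ext)\le 0$ forces $\alpha_*(\Omega^\ext)=0$.

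For $d\ge 3$ the core is to prove $C_\Omega<\infty$, and here I would use the potential-theoretic object that exists exactly in this range of dimensions: the capacitary potential $g$, i.e.\ the harmonic function on $\Omega^\ext$ with $g=1$ on $\partial\Omega$ and $g(x)\to 0$ as $|x|\to\infty$, which satisfies $0<g<1$, is smooth up to $\partial\Omega$, and obeys $\partial_\nu g<0$ on $\partial\Omega$ by Hopf's lemma ($\nu$ the outward normal of $\Omega$). Setting $W:=-\nabla g/g$, a vector field smooth on $\overline{\Omega^\ext}$, the harmonicity of $g$ gives the pointwise identity $\divergence W-|W|^2=0$. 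For $u\in C_c^\infty(\overline{\Omega^\ext})$, integrating $\divergence(u^2 W)$ over $\Omega^\ext$ and using $-2u\,W\cdot\nabla u\le|\nabla u|^2+u^2|W|^2$ then yields
$$
\int_{\partial\Omega} u^2\,(W\cdot\nu)\,\der S \le \int_{\Omega^\ext}|\nabla u|^2\,\der x\,,
$$
where on $\partial\Omega$ one has $W\cdot\nu=-\partial_\nu g\ge c_0>0$ by compactness. Hence $C_\Omega\le 1/c_0<\infty$, so that $\alpha_*(\Omega^\ext)\le -c_0<0$; the inequality passes from $C_c^\infty(\overline{\Omega^\ext})$ to all of $W^{1,2}(\Omega^\ext)$ by density and continuity of the trace.

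The main obstacle is precisely this $d\ge 3$ step, and more pointedly the fact that it has no low-dimensional counterpart. The argument hinges entirely on the existence of a positive, decaying harmonic potential $g$ on $\Omega^\ext$, which exists if and only if the compact set carries positive capacity, i.e.\ if and only if $d\ge 3$. For $d\le 2$ the only bounded harmonic function with $g=1$ on $\partial\Omega$ is $g\equiv 1$, for which $\partial_\nu g=0$ and the constant $c_0$ degenerates to $0$, in perfect agreement with the $C_\Omega=\infty$ found above. Thus the dimensional dichotomy of the proposition is, at bottom, the transience/recurrence dichotomy of Brownian motion, and the only genuinely analytic work is to record the regularity and the Hopf-type sign of $g$ (standard for smooth compact boundaries) together with the density argument needed to extend the trace inequality to the whole form domain.
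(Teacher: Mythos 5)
Your argument is correct, but for the decisive case $d\ge 3$ it takes a genuinely different route from the paper. Both proofs ultimately rest on the same reduction — $\alpha_*(\Omega^\ext)<0$ is equivalent to a uniform trace inequality $\int_{\partial\Omega}|u|^2\le C\int_{\Omega^\ext}|\nabla u|^2$ on $W^{1,2}(\Omega^\ext)$, and your normalisation $\alpha_*(\Omega^\ext)=-1/C_\Omega$ packages this (together with the finiteness of $\alpha_*$ and the threshold structure, which the paper handles separately via monotonicity of $\alpha\mapsto Q_\alpha^{\Omega^\ext}$) particularly cleanly. Where you diverge is in how the trace inequality is produced: the paper imports a Gagliardo--Nirenberg--Sobolev inequality for exterior domains, converts it by H\"older with the integrable weight $e^{-|x|}$ into a Hardy-type bound, localises to a bounded collar $\Omega^\ext\cap B_R$, and finishes with the trace theorem; you instead construct the capacitary potential $g$ (harmonic, $g=1$ on $\partial\Omega$, decaying at infinity), perform the ground-state substitution $W=-\nabla g/g$ with $\operatorname{div}W=|W|^2$, and read off the boundary constant from Hopf's lemma. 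Your computation checks out (the orientation of $\nu$, the sign $-\partial_\nu g>0$, and the cancellation of the $u^2|W|^2$ terms are all handled correctly), and it buys an explicit constant $c_0=\min_{\partial\Omega}(-\partial_\nu g)$ tied to the equilibrium measure, at the price of invoking the exterior Dirichlet problem and elliptic regularity up to the boundary; the paper's route avoids potential theory but relies on the cited Sobolev inequality of Lu--Ou. Note that the standing assumption that $\Omega^\ext$ is connected is actually needed for your Hopf step (a bounded component of the complement of $\overline\Omega$ would force $g\equiv 1$ there and $\partial_\nu g=0$ on the corresponding boundary piece), so you should record it. The $d\le 2$ part via logarithmic (resp.\ linear) cut-offs is the standard argument and is the one the paper delegates to its predecessor.
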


The proof of the proposition for $d=1,2$ can be found in~\cite[Prop.~2]{KL1}.
The case $d \geq 3$ is established below 
with help of a Gagliardo-Nierenberg-Sobolev inequality.
For the exteriors of balls, the critical coupling can be computed explicitly
by using the explicit form of solutions of $-\Delta u = \lambda u$
in terms of Bessel functions (see below).
\begin{prop}\label{Prop.ball}
Let $d \geq 2$. Then
$$
  \aa_*(B_R^\ext) = -\frac{d-2}{R}
  \,.
$$
\end{prop}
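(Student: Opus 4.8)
The plan is to exploit the rotational symmetry of the ball, which reduces the problem to a one-dimensional radial analysis that is solvable in closed form through modified Bessel functions. First I would decompose $L^2(B_R^\ext)$ into the orthogonal sum of spherical-harmonic sectors, indexed by the angular momentum $\ell$. Since the centrifugal term $\ell(\ell+d-2)/r^2$ arising from separation of variables is nonnegative and only raises the energy, the bottom $\EvB$ of the spectrum is attained in the radial sector $\ell=0$, and the least negative value of~$\alpha$ at which a negative eigenvalue first appears is likewise governed by this sector. It therefore suffices to search for radial solutions $u(x)=f(|x|)$.

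Next, for a prospective negative eigenvalue $\lambda=-\kappa^2$ with $\kappa>0$, the radial part of $-\Delta u=\lambda u$ becomes the ordinary differential equation
\[
  f''(r)+\frac{d-1}{r}\,f'(r)-\kappa^2 f(r)=0,\qquad r>R,
\]
whose unique (up to scaling) solution lying in $L^2\big((R,\infty),r^{d-1}\,\mathrm{d}r\big)$ is $f(r)=r^{-\nu}K_\nu(\kappa r)$ with $\nu:=(d-2)/2$, where $K_\nu$ denotes the modified Bessel function of the second kind. The Robin boundary condition attached to the form in~\eqref{Rayleigh} reads $\partial_n u+\alpha u=0$ on $\partial B_R^\ext$, and since the outer normal of the exterior domain points towards the origin one has $\partial_n=-\partial_r$, so the condition becomes $f'(R)=\alpha f(R)$. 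Using the recurrence $K_\nu'(z)=\tfrac{\nu}{z}K_\nu(z)-K_{\nu+1}(z)$, this collapses to the dispersion relation
\[
  \alpha = \frac{f'(R)}{f(R)} = -\,\kappa\,\frac{K_{\nu+1}(\kappa R)}{K_\nu(\kappa R)} =: \beta(\kappa).
\]

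Finally I would identify the critical coupling as the threshold $\kappa\to0^+$ of this relation. For $d\ge 3$ (so $\nu>0$) the small-argument asymptotics $K_\nu(z)\sim\tfrac12\Gamma(\nu)(z/2)^{-\nu}$ give $K_{\nu+1}(z)/K_\nu(z)\sim 2\nu/z$, whence $\beta(\kappa)\to -2\nu/R=-(d-2)/R$; for $d=2$ (so $\nu=0$) the logarithmic behaviour $K_0(z)\sim-\ln z$ and $K_1(z)\sim 1/z$ instead forces $\beta(\kappa)\to0$, in agreement with the formula. Because $\EvB$ is monotone in~$\alpha$ by form monotonicity — equivalently, $\beta$ is monotone, decreasing from $-(d-2)/R$ at $\kappa=0^+$ to $-\infty$ as $\kappa\to\infty$ — the equation $\alpha=\beta(\kappa)$ admits a solution $\kappa>0$ precisely when $\alpha<-(d-2)/R$, which yields $\alpha_*(B_R^\ext)=-(d-2)/R$. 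The step I expect to be the main obstacle is this last identification: one must argue rigorously that the $\kappa\to0^+$ limit of the dispersion relation really is the critical coupling, rather than merely some value in the range of $\beta$, and the cleanest route is through the monotonicity of $\beta$. The degenerate planar case $d=2$, where the threshold solution $r^{2-d}$ degenerates and the asymptotics turn logarithmic, will also require separate (though easy) treatment.
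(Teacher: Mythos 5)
Your route is essentially the paper's: reduction to the radial sector, the decaying solution $r^{-\nu}K_\nu(\kappa r)$, the dispersion relation $\alpha=\beta(\kappa):=-\kappa K_{\nu+1}(\kappa R)/K_\nu(\kappa R)$, and the identification of $\aa_*(B_R^\ext)$ with the limit of $\beta$ as $\kappa\to 0^+$. The one place where your argument has a genuine gap is precisely the step you flag as the main obstacle. You justify $\aa_*(B_R^\ext)=\lim_{\kappa\to0^+}\beta(\kappa)$ by asserting that $\beta$ is monotone, and you claim this is ``equivalent'' to the form monotonicity of $\alpha\mapsto\lambda_1^\alpha(B_R^\ext)$. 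That equivalence is false: form monotonicity tells you that $\{\alpha:\lambda_1^\alpha(B_R^\ext)<0\}$ is a half-line $(-\infty,\aa_*)$, hence that $\aa_*=\sup_{\kappa>0}\beta(\kappa)$; it says nothing about the monotonicity of $\kappa\mapsto\beta(\kappa)$ (a non-monotone $\beta$ would merely produce several radial bound states for some $\alpha$, which form monotonicity does not forbid). What is still missing is the pointwise inequality $\beta(\kappa)<-(d-2)/R$ for every $\kappa>0$, i.e.\ $f(x):=xK_{\nu+1}(x)/K_\nu(x)>2\nu$ for all $x>0$; without it the small-argument asymptotics combined with the intermediate value theorem only give $\aa_*\geq-(d-2)/R$.

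The paper closes exactly this gap by proving that $f$ is strictly increasing on $(0,\infty)$, which after a lengthy manipulation of the recurrences reduces to the Tur\'an-type inequality $K_{\nu-1}(x)K_{\nu+1}(x)>K_\nu(x)^2$ of \cite[Thm.~8]{Segura_2011}; together with $f(0^+)=2\nu$ and $f(\infty)=\infty$ this yields existence (and uniqueness) of the root precisely for $\alpha<-(d-2)/R$. If you only want the value of $\aa_*$ and not uniqueness, there is a cheaper fix: the recurrence $K_{\nu+1}(x)-K_{\nu-1}(x)=\frac{2\nu}{x}K_\nu(x)$ together with $K_{\nu-1}(x)>0$ gives $xK_{\nu+1}(x)>2\nu K_\nu(x)$, i.e.\ $f(x)>2\nu$, in one line. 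Either way, some Bessel-function input beyond the asymptotics at $0$ and $\infty$ is required, and your proposal does not supply it. Your separate treatment of $d=2$ via the logarithmic behaviour of $K_0$ is correct (and is a point the paper glosses over, since its computation of $\lim_{x\to0^+}f(x)$ is written with $\Gamma(\nu)$, which degenerates at $\nu=0$).
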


As a consequence of Proposition~\ref{Prop.critical},
we see that $\lambda_1^\alpha(\Omega^\ext)$ is a negative discrete eigenvalue
for \emph{all} negative~$\alpha$ only if $d=1,2$ and that is why  
Theorem~\ref{thm0} is restricted to the planar situation
(the case $d=1$ is trivial).
At the same time, in~\cite[Sec.~5.3]{KL1} we argue 
why the claim of Theorem~\ref{thm0} 
cannot hold in the same form 
for dimensions $d \ge 3$. 

Since~$\Omega$ of Theorem~\ref{thm0} is assumed to be convex,
it is necessarily connected.
According to an example in~\cite[Sec.~5.1]{KL1}, 
the connectedness of~$\Omega$ is necessary in the above theorem. 

The discussion in the two preceding paragraphs leads to the following natural questions
related to Theorem~\ref{thm0}:
\begin{myenum}
	\item[1.]
%[\boldmath{$\star$}] 
\emph{Can one replace the convexity assumption by connectedness?}
	\item[2.] 
%[\boldmath{$\star$}] 
\emph{Does a similar result hold in higher dimensions under  other constraints?}
\end{myenum}	

The objective of the present paper is to elaborate on these questions,
which were left open in our previous work~\cite{KL1}.
The present paper can be thus viewed as a natural continuation of~\cite{KL1},
but it can be also read fully independently.

\subsection{From convexity to connectedness}
In what follows, let $\Omega\subset\dR^d$ with $d \ge 2$ 
be a non-empty smooth bounded open set.
The set~$\Omega$ is not necessarily connected,
but we always assume that its exterior~$\Omega^\ext$ is connected.
The volume of and 
the area of the boundary for~$\Omega$ 
will be denoted by~$|\Omega|$ and~$|\p\Omega|$, respectively.
By $N_\Omega$ we denote the number of connected
	components of~$\Omega$.

In the two-dimensional setting, 
our main result reads as follows.
\begin{thm}\label{Thm1}
	Let $d=2$. For all negative~$\alpha$, we have
	\begin{equation*}%\label{results2}
		\max_{\frac{|\p\Omega|}{ N_\Omega} = c} \Ev	\le \EvB \,.
	\end{equation*}
	Here the maximum is taken over all smooth, bounded
	open sets~$\Omega$ consisting
	of finitely many disjoint simply connected
	components and satisfying the relation 
	$\frac{|\p\Omega|}{N_\Omega} = c$ with given $c> 0$
	and  $B_R$ is the disk of 
	perimeter $|\p B_R| = c$.
\end{thm}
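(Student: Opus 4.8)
The plan is to test the Rayleigh quotient \eqref{Rayleigh} on $\Omega^\ext$ with a trial function obtained from the radial ground state of the ball exterior via parallel coordinates. Let $g \colon [0,\infty) \to (0,\infty)$ be the profile of the ground state of $\OpB$, so that $g$ is positive, decays exponentially (because $\EvB < 0$ by Proposition~\ref{Prop.critical}), and satisfies the Robin condition $g'(0) = \aa\, g(0)$ together with
\begin{equation*}
  -g'' - \frac{1}{R+t}\, g' = \EvB\, g \,, \qquad t > 0 \,.
\end{equation*}
Writing $d_\Omega(x) := \dist(x,\Omega)$ and recalling that $|\nabla d_\Omega| = 1$ almost everywhere on $\Omega^\ext$, I would use $u := g \circ d_\Omega \in W^{1,2}(\Omega^\ext)$ as a test function. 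Denoting by
\begin{equation*}
  \ell(t) := \mathcal{H}^1\bigl(\{x \in \Omega^\ext : d_\Omega(x) = t\}\bigr)
\end{equation*}
the length of the parallel curve at distance~$t$, the coarea formula turns the integrals in \eqref{Rayleigh} into weighted one-dimensional integrals and yields
\begin{equation*}
  \Ev \le
  \frac{\displaystyle \int_0^\infty |g'(t)|^2\, \ell(t)\,\der t + \aa\, |\p\Omega|\, g(0)^2}
       {\displaystyle \int_0^\infty |g(t)|^2\, \ell(t)\,\der t} \,.
\end{equation*}

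Set $F(t) := |g'(t)|^2 - \EvB\, |g(t)|^2$; since $\EvB < 0$, one has $F \ge 0$ pointwise. Multiplying the ball equation by $g$ and integrating by parts against the weight $\ell_{B_R}(t) = 2\pi(R+t) = c + 2\pi t$, which is the exact parallel-curve length for the disk, produces the identity
\begin{equation*}
  \int_0^\infty F(t)\,(c + 2\pi t)\,\der t = -\aa\, c\, g(0)^2 \,.
\end{equation*}
Clearing the positive denominator above and using the constraint $|\p\Omega| = N_\Omega\, c$, the desired inequality $\Ev \le \EvB$ becomes equivalent to
\begin{equation*}
  \int_0^\infty F(t)\, \ell(t)\, \der t \le N_\Omega \int_0^\infty F(t)\,(c + 2\pi t)\, \der t \,.
\end{equation*}
Because $F \ge 0$, this is in turn implied by the pointwise geometric estimate
\begin{equation}\label{eq:key}
  \ell(t) \le |\p\Omega| + 2\pi N_\Omega\, t \,, \qquad t \ge 0 \,,
\end{equation}
which reduces the whole theorem to a statement of planar integral geometry.

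To prove \eqref{eq:key}, consider the outer parallel sets $\Omega_t := \{x : d_\Omega(x) < t\}$, so that $\ell(t) = \mathcal{H}^1(\p\Omega_t)$ and $\ell(0) = |\p\Omega|$. Every connected component of $\Omega_t$ contains at least one component of~$\Omega$, and two points lying in a common component of $\Omega_s$ stay in a common component of $\Omega_t$ for $t > s$; hence the number of components of~$\Omega_t$ is non-increasing and bounded by~$N_\Omega$, so its Euler characteristic obeys $\chi(\Omega_t) = b_0(\Omega_t) - b_1(\Omega_t) \le b_0(\Omega_t) \le N_\Omega$ for all~$t$ (as $t$ grows the components may merge and enclose holes, but never split). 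At those~$t$ for which $\p\Omega_t$ is a smooth one-dimensional manifold, the first variation of perimeter under the unit-speed normal flow combined with the flat Gauss--Bonnet theorem gives
\begin{equation*}
  \ell'(t) = \int_{\p\Omega_t} \kappa_t \,\der s = 2\pi\, \chi(\Omega_t) \le 2\pi N_\Omega \,,
\end{equation*}
$\kappa_t$ being the signed curvature of $\p\Omega_t$. Integrating this from~$0$ with $\ell(0) = |\p\Omega|$ yields \eqref{eq:key}.

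The delicate point, and the one I expect to be the main obstacle, is the regularity of the parallel sets~$\Omega_t$: the curvature computation above is valid only away from the cut locus of $\Omega^\ext$, and at the exceptional (measure-zero) set of values of~$t$ the curve $\p\Omega_t$ develops corners while the topology of~$\Omega_t$ changes. I would handle this by the Fiala--Hartman mechanism for parallel curves in the plane, namely that length is only \emph{lost} as one crosses the cut locus, so that $t \mapsto \ell(t)$ can suffer only downward jumps and its growth rate never exceeds $2\pi\chi(\Omega_t)$; equivalently $\ell(t) - 2\pi N_\Omega\, t$ is non-increasing in the almost-everywhere-plus-jumps sense, which suffices for \eqref{eq:key}. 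Finally, the disk $B_R$ itself has $N_\Omega = 1$, perimeter $c$, and attains the value $\EvB$, so the admissible class realises the bound and the theorem follows; tracing equality back through the argument forces $\chi(\Omega_t) \equiv N_\Omega$ and convexity of each component, singling out the disk when $N_\Omega = 1$.
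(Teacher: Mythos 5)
Your proposal is correct and follows essentially the same route as the paper: test functions depending only on the distance to $\partial\Omega$, the coarea formula, and the key geometric bound $\ell(t)\le|\partial\Omega|+2\pi N_\Omega\,t$ on the length of the parallel curves (which the paper imports from Savo and Sz.-Nagy rather than re-deriving via Gauss--Bonnet as you sketch). The only cosmetic differences are that you insert the explicit radial ground state of the disk exterior instead of minimising over all radial profiles, and your $F\ge 0$ device is a clean way of handling the sign issue that the paper dispatches with the remark that $\EvB<0$.
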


The main improvement upon Theorem~\ref{thm0}
consists in the replacement of exteriors of convex sets 
by exteriors of finite unions of simply connected (not necessarily convex) components.
In Section~\ref{ssec:ex1} we 
use large coupling asymptotics (\ie~$\aa\arr -\infty$)
to argue why the result of Theorem~\ref{Thm1}
is sharp even for a subclass of sets having
prescribed fixed number of connected components. 
The proof of Theorem~\ref{Thm1} 
relies on a usage of parallel coordinates
as employed by Payne and Weinberger in~\cite{Payne-Weinberger_1961} 
in order to get an upper bound on the principal Dirichlet eigenvalue 
on bounded domains (see~\cite{FK7,KL1} for previous applications
of the technique in the Robin problem).

In the special case of simply connected domains
the statement of Theorem~\ref{Thm1} implies the following important 
improvement upon Theorem~\ref{thm0}.
\begin{cor}\label{Cor}
Let $d=2$.
	For all negative~$\alpha$, we have
	\begin{equation*}%\label{results}
		\max_{|\p\Omega| = c_1}	\Ev	
		= 
		\lm_1^\aa(B_{R_1}^\ext)
		\qquad \mbox{and} \qquad
		\max_{|\Omega| = c_2}\Ev	= \lm_1^\aa(B_{R_2}^\ext)
                \,. 
	\end{equation*}
	Here the maxima are taken over all smooth, bounded,
	simply connected open sets $\Omega$ 
	of a given perimeter $c_1 > 0$ or area $c_2 > 0$, respectively,
	and $B_{R_1}$ and $B_{R_2}$ are disks of perimeter $|\p B_{R_1}| = c_1$	and area $|B_{R_2}| = c_2$.
\end{cor}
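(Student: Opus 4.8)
The plan is to derive Corollary~\ref{Cor} directly from Theorem~\ref{Thm1} by specialising the latter to the case of a single simply connected component. First I would treat the isoperimetric statement. For a simply connected~$\Omega$ we have $N_\Omega = 1$, so the constraint $\frac{|\p\Omega|}{N_\Omega} = c$ reduces precisely to the perimeter constraint $|\p\Omega| = c_1$ upon setting $c = c_1$. Theorem~\ref{Thm1} then yields $\Ev \le \lm_1^\aa(B_{R_1}^\ext)$ for every admissible~$\Omega$, where $B_{R_1}$ is the disk with $|\p B_{R_1}| = c_1$. Since the disk is itself simply connected and hence belongs to the admissible class, the bound is attained, giving equality in the maximum. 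This disposes of the first identity with essentially no further work.

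The isochoric statement requires one extra ingredient, since Theorem~\ref{Thm1} is phrased in terms of perimeter rather than area. The key observation is that among simply connected planar sets of fixed area, the classical isoperimetric inequality gives $|\p\Omega| \ge |\p B_{R_2}|$, with equality only for the disk. Combined with the \emph{monotonicity} of $\lm_1^\aa(B_R^\ext)$ as a function of~$R$ (for fixed negative~$\alpha$), one can argue as follows: apply the perimeter version of Theorem~\ref{Thm1} with $c = |\p\Omega|$ to bound $\Ev$ by $\lm_1^\aa(B_\rho^\ext)$, where $B_\rho$ is the disk of the \emph{same} perimeter as~$\Omega$; then use $\rho \ge R_2$ together with the $R$-monotonicity to conclude $\lm_1^\aa(B_\rho^\ext) \le \lm_1^\aa(B_{R_2}^\ext)$. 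Chaining the two inequalities gives $\Ev \le \lm_1^\aa(B_{R_2}^\ext)$, and again the disk $B_{R_2}$ realises equality.

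The main obstacle, and the only point needing genuine justification beyond invoking Theorem~\ref{Thm1}, is establishing the correct monotonicity of $R \mapsto \lm_1^\aa(B_R^\ext)$ and checking that the isoperimetric deficit pushes the eigenvalue in the favourable direction. I expect $\lm_1^\aa(B_R^\ext)$ to be \emph{decreasing} in~$R$ for negative~$\alpha$ (a larger ball giving a more negative eigenvalue), which is exactly what is needed so that the disk of smallest perimeter, namely $B_{R_2}$, yields the largest eigenvalue. This monotonicity can be read off from the explicit Bessel-function characterisation underlying Proposition~\ref{Prop.ball}, or deduced from the scaling behaviour of the problem. Once this is in hand, both assertions of the corollary follow by the elementary chaining described above, and the equality cases are immediate because the optimising disk lies in the admissible class in each problem.
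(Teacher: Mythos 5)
Your proposal is correct and follows essentially the same route as the paper: the perimeter statement is the $N_\Omega=1$ case of Theorem~\ref{Thm1}, and the area statement is obtained by chaining the classical isoperimetric inequality with the monotonicity of $R\mapsto\lm_1^\aa(B_R^\ext)$. The monotonicity you single out as the remaining obstacle is exactly the fact recorded in~\eqref{monotonicity} (quoted from \cite[Prop.~5]{KL1}), and it has the decreasing direction you anticipate, so nothing further is needed.
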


\subsection{Higher dimensions}
Let us now pass to the discussion of our results in higher dimensions. 
To this aim we need to recall some geometric concepts.
Let $\kp_1,\kp_2,\dots, \kp_{d-1}$ denote the principal curvatures of $\p\Omega$;
our convention is such that these functions are non-negative if~$\Omega$ is convex.
The mean curvature of $\p\Omega$ is defined as the function
\begin{equation}\label{eq:mean}
	M := \frac{\kp_1+ \kp_2+\dots + \kp_{d-1}}{d-1}
	\,.
\end{equation}
For the notational convenience,
we define also the number
\begin{equation}\label{total}
	\cM(\p\Omega) := 
        \frac{1}{|\p\Omega|}
	\dint[]{\p\Omega} M^{d-1}  \,.
\end{equation}
Note that for the ball $B_R\subset\dR^d$ of radius $R > 0$ 
we have $\cM(\p B_R)  = R^{-(d-1)}$.
Now we are prepared to formulate our main result in higher dimensions.
\begin{thm}\label{Thm2}
Let $d \geq 3$.
	For all negative~$\alpha$, we have
	\begin{align*}%\label{results2}
		\max_{\stackrel[\Omega~\text{\rm convex}]{}{\cM(\p\Omega) = c}}
		\Ev	= \EvB \,.
	\end{align*}
	Here the maximum is taken over all convex, smooth,
	bounded open sets~$\Omega$
   	such that $\cM(\p\Omega) = c$
   		with given $c > 0$
   		and $B_R$ is the ball with $\cM(\p B_R) = c$.
\end{thm}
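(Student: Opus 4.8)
The plan is to bound $\Ev$ from above by transplanting the radial ground state of the exterior problem for the ball onto $\Omega^\ext$ via parallel (tubular) coordinates, thereby reducing the comparison of the two Rayleigh quotients to a one-dimensional inequality governed by the geometry of the parallel surfaces. First I would dispose of the trivial regime: by Proposition~\ref{Prop.ball} the ball carries a genuine negative eigenvalue only when $\aa < -(d-2)/R = \aa_*(B_R^\ext)$, while for $\aa_*(B_R^\ext)\le\aa<0$ one has $\EvB=0$, and $\Ev\le 0$ always holds because $[0,\infty)$ lies in the essential spectrum; the asserted inequality is then immediate. So I assume $\EvB<0$ and let $f=f(s)$, $s\ge 0$, be the positive radial ground state on $B_R^\ext$ written as a function of the distance $s$ to $\p B_R$, with eigenvalue $\lm:=\EvB$.

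For convex $\Omega$ the exterior is smoothly foliated by the parallel hypersurfaces $\{x:\dist(x,\p\Omega)=s\}$, whose area is given by the Steiner formula $A_\Omega(s)=\int_{\p\Omega}\prod_{i=1}^{d-1}(1+s\kp_i)\,\der\sigma=\sum_{k=0}^{d-1}W_k s^k$ with $W_k:=\int_{\p\Omega}e_k(\kp_1,\dots,\kp_{d-1})\,\der\sigma$, whereas $A_{B_R}(s)=\const\cdot(R+s)^{d-1}$. Setting $u(x):=f(\dist(x,\p\Omega))$ and using $|\nabla\dist|=1$ together with the coarea formula converts the three integrals in~\eqref{Rayleigh} into $\int_0^\infty|f'|^2A_\Omega\,\der s$, $\int_0^\infty|f|^2A_\Omega\,\der s$ and the boundary term $|f(0)|^2|\p\Omega|$. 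The key computation is an integration by parts in which the Robin condition $f'(0)=\aa f(0)$ exactly cancels the $\aa$-term, and substitution of the ball's radial equation $(A_{B_R}f')'=-\lm A_{B_R}f$ produces the identity
\begin{equation*}
\int_0^\infty|f'|^2A_\Omega\,\der s+\aa|f(0)|^2|\p\Omega|-\lm\int_0^\infty|f|^2A_\Omega\,\der s=-\int_0^\infty f\,f'\,A_\Omega\Big(\log\tfrac{A_\Omega}{A_{B_R}}\Big)'\der s .
\end{equation*}

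Because $f>0$ and $f'<0$ (the latter since $A_{B_R}f'$ is increasing and vanishes at infinity), the right-hand side is nonpositive as soon as $s\mapsto A_\Omega(s)/A_{B_R}(s)$ is nonincreasing, which yields $\Ev\le\lm=\EvB$, with equality for $\Omega=B_R$. Thus everything reduces to this monotonicity. Computing the derivative of the ratio and clearing positive factors, the condition becomes $A_\Omega'(s)(R+s)-(d-1)A_\Omega(s)\le 0$ on $[0,\infty)$; expanding in powers of $s$ and passing to the normalised curvature integrals $V_k:=\binom{d-1}{k}^{-1}W_k$, this is implied by $R\,V_{k+1}\le V_k$ for every $k$.

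The main obstacle is precisely this chain of geometric inequalities. Here two classical facts enter. The Alexandrov--Fenchel inequalities assert that the sequence $(V_k)$ is log-concave (the ball being the equality case), so the ratios $V_{k+1}/V_k$ are nonincreasing in $k$; it therefore suffices to handle $k=0$, i.e. to prove $R\,V_1/V_0\le 1$. Since $V_1/V_0=|\p\Omega|^{-1}\int_{\p\Omega}M$ is the average mean curvature, the constraint $\cM(\p\Omega)=R^{-(d-1)}$ combined with Jensen's inequality for the convex function $t\mapsto t^{d-1}$ gives $|\p\Omega|^{-1}\int_{\p\Omega}M\le\big(|\p\Omega|^{-1}\int_{\p\Omega}M^{d-1}\big)^{1/(d-1)}=R^{-1}$, which closes the argument. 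It is exactly the power $d-1$ built into~\eqref{total} that makes the Jensen step output the \emph{mean} curvature, so that Alexandrov--Fenchel and this single scalar constraint mesh; that interplay is the delicate point, while the remaining verifications (smoothness of the tubular foliation for convex sets, membership $u\in W^{1,2}(\Omega^\ext)$ from the exponential decay of $f$ against the polynomial growth of $A_\Omega$, and the signs of $f$ and $f'$) are routine.
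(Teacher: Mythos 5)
Your proposal is correct, and while it shares the paper's overall skeleton (dispose of the case $\EvB=0$, pass to parallel coordinates on the exterior of the convex body, reduce everything to the Steiner polynomial $A_\Omega(s)=\sum_k W_k s^k$ of the parallel surfaces), both of its key technical steps are executed by genuinely different means. For the spectral comparison, the paper does not transplant the ground state and integrate by parts; it restricts the minimax to test functions of $t$ alone, obtaining a one-dimensional Rayleigh quotient with weight $P_{\p\Omega}(t):=A_\Omega(t)/|\p\Omega|$, and then compares it with the corresponding quotient for $P_{\p B_R}$ using only the \emph{pointwise} inequality $P_{\p\Omega}(t)\le P_{\p B_R}(t)$ together with the negativity of the numerator at the optimal test function. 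Your argument instead requires the monotonicity of the ratio $A_\Omega/A_{B_R}$; since both normalised polynomials equal $1$ at $t=0$, this is a strictly stronger geometric statement than the paper's pointwise bound. Correspondingly, the geometric inputs differ: the paper proves $\cM_j(\p\Omega)\le R^{-j}$ for each $j$ separately via Maclaurin's inequality $M_j\le M^j$ followed by Jensen (plus one Alexandrov--Fenchel-type inequality to get $|\p\Omega|\ge|\p B_R|$ for the top coefficient), whereas you prove the chain $R\,V_{k+1}\le V_k$ by combining the full Alexandrov--Fenchel log-concavity of the quermassintegral sequence with a single Jensen step at $k=0$; Maclaurin's inequality does not appear in your proof at all. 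Your route buys a cleaner reduction (one scalar inequality plus a classical log-concavity statement) and a quantitative identity whose right-hand side measures the defect from the ball, at the price of invoking the deeper Alexandrov--Fenchel chain; the paper's route needs only elementary symmetric-function inequalities for the intermediate curvature integrals and isolates the one place where a genuinely global convexity inequality is indispensable. I checked the integration-by-parts identity, the signs of $f$ and $f'$, the coefficient computation giving $(k+1)W_{k+1}R\le(d-1-k)W_k$, and its equivalence with $R\,V_{k+1}\le V_k$; all are sound.
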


Since $\cM(\p\Omega) = 2\pi/|\p\Omega|$ for any simply connected planar domain, 
Theorem~\ref{Thm2} remains valid in dimension two, 
where it follows from the isoperimetric result of Theorem~\ref{thm0}.
If $d=3$, the integral $\int_{\p\Omega} M^{2}$
in the numerator in~\eqref{total} is sometimes referred to
as the Willmore energy, due to Willmore's demonstration that
the sphere minimises this integral, \cf~\cite[Sec.~2]{Willmore}.
%The constraint in Theorem~\ref{Thm2} is reminiscent
%of and inspired by the constraints imposed 
%by Alexandrov in a class of purely geometric optimisation problems related to convex bodies; \cf~\cite[\S~4.20]{BuZa}.

If $d \geq 3$ and $\alpha \geq \alpha_*(\Omega^\ext)$,
the inequality $\Ev \leq \EvB$ of Theorem~\ref{Thm2} by itself
is just a trivial statement,
for in this case $\lambda_1^\alpha(\Omega^\ext) = 0$
is the lowest point of the essential spectrum.
However, from this inequality 
immediately follows an interesting optimisation result 
for the critical coupling.
%(which trivially holds also in low dimensions $d=1,2$).
%
\begin{cor}
Let $d \geq 3$.
        We have
	\begin{align*}%\label{results2}
		\min_{\stackrel[\Omega~\text{\rm convex}]{}{\cM(\p\Omega) = c}}
		\aa_*(\Omega^\ext) = \aa_*(B_R^\ext) \,.
	\end{align*}
	Here the minimum is taken over all convex, smooth,
	bounded open sets~$\Omega$
	such that $\cM(\p\Omega) = c$
	with  given $c > 0$
		and $B_R$ is the ball with $\cM(\p B_R) = c$.
\end{cor}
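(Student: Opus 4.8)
The plan is to read off the corollary directly from Theorem~\ref{Thm2} by exploiting the threshold characterisation of the critical coupling recalled in the introduction, namely that for every admissible~$\Omega$ the lowest point $\Ev$ is a negative discrete eigenvalue if, and only if, $\aa<\aa_*(\Omega^\ext)$. Fix $c>0$ and let $B_R$ be the ball with $\cM(\p B_R)=c$, which is uniquely determined through the relation $\cM(\p B_R)=R^{-(d-1)}$. For $d\ge 3$ both $\aa_*(B_R^\ext)=-(d-2)/R$ by Proposition~\ref{Prop.ball} and $\aa_*(\Omega^\ext)$ by Proposition~\ref{Prop.critical} are strictly negative, so the whole argument takes place in the regime of negative~$\aa$ where Theorem~\ref{Thm2} is in force; in particular the half-line of couplings below $\aa_*(B_R^\ext)$ is a nonempty set of negative reals.

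The core observation I would use is that, since the essential spectrum fills $[0,\infty)$, the lowest point is negative exactly when it is a negative discrete eigenvalue. Thus $\Ev<0$ is equivalent to $\aa<\aa_*(\Omega^\ext)$, and likewise for the ball. Combining this equivalence with the pointwise inequality $\Ev\le\EvB$ supplied by Theorem~\ref{Thm2}, I would argue that any $\aa<\aa_*(B_R^\ext)$ forces $\EvB<0$, whence $\Ev\le\EvB<0$, and therefore $\aa<\aa_*(\Omega^\ext)$. This shows the inclusion of half-lines $(-\infty,\aa_*(B_R^\ext))\subseteq(-\infty,\aa_*(\Omega^\ext))$ and consequently
\[
  \aa_*(B_R^\ext)\le\aa_*(\Omega^\ext)
\]
for every convex, smooth, bounded open set~$\Omega$ with $\cM(\p\Omega)=c$. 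Note that no separate monotonicity of $\aa\mapsto\Ev$ is needed: the threshold characterisation already encodes it.

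Finally, since the ball~$B_R$ is itself convex, smooth, bounded and satisfies $\cM(\p B_R)=c$, it belongs to the admissible class, so the lower bound just obtained is attained, giving
\[
  \min_{\stackrel[\Omega~\text{\rm convex}]{}{\cM(\p\Omega)=c}}\aa_*(\Omega^\ext)=\aa_*(B_R^\ext),
\]
which is the assertion. I expect no genuine obstacle here, as the corollary is a soft consequence of Theorem~\ref{Thm2}; the only points requiring a word of care are the strict negativity of both critical couplings, so that the negative-$\aa$ hypothesis of Theorem~\ref{Thm2} is not vacuous, and the membership of the ball in the admissible class. The substantive work is entirely contained in Theorem~\ref{Thm2} itself.
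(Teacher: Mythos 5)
Your argument is correct and is precisely the reasoning the paper leaves implicit when it says the corollary ``immediately follows'' from the inequality $\Ev \le \EvB$ of Theorem~\ref{Thm2}: you combine that inequality with the threshold characterisation of $\aa_*$ and note that the ball itself is admissible. No substantive difference from the paper's approach.
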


For the proof of Theorem~\ref{Thm2} we push forward the approach of~\cite{KL1} 
based on the parallel coordinates. 
This method requires the convexity assumption, as otherwise the parallel
coordinates on $\Omega^\ext$ are not well defined.

\subsection{Organisation of the paper}
This paper is organised as follows.
In Section~\ref{Sec.pre} we provide an operator-theoretic framework for 
the Robin eigenvalue problem in the exterior of a compact set. 
Namely, we recall basic spectral properties
from~\cite{KL1} and obtain some new ones. In particular, 
we prove Proposition~\ref{Prop.critical} for $d \geq 3$
and Proposition~\ref{Prop.ball}.
The two-dimensional
case is discussed in Section~\ref{Sec.2D}, in which we prove Theorem~\ref{Thm1} and its Corollary~\ref{Cor}. Moreover, we argue why the result of Theorem~\ref{Thm1} 
is sharp for domains with fixed number of connected components. 
Finally, in Section~\ref{Sec.nD} we define higher order mean curvatures and use them to prove Theorem~\ref{Thm2} on the higher-dimensional 
case. We conclude Section~\ref{Sec.nD}
by a discussion of  a connection between
Theorem~\ref{Thm2} and large coupling asymptotics.

%-----------------------------------------------------------%
\section{The spectral problem in the exterior of a compact set}\label{Sec.pre}
%-----------------------------------------------------------%
%
Throughout this section, 
$\Omega$~is an arbitrary bounded open set in~$\dR^d$ with $d \ge 2$.
While~$\Omega$ is not assumed to be connected,
a standing assumption is that the exterior~$\Omega^\ext$ is connected.
We also assume that the boundary~$\p\Omega$ is of class $C^\infty$.
Finally, $\aa$~stands for an arbitrary negative real number.

We are interested in the eigenvalue problem
\begin{equation}\label{problem.ext}
\vast \{
\begin{aligned}
	-\Delta u &= \lm u && \mbox{in} \quad \Omega^\ext \,,
	\\
	\frac{\p u}{\p n} & = \aa \;\! u && \mbox{on} 
	\quad \p\Omega^\ext \,,
\end{aligned}
\end{equation}
where~$n$ is the \emph{outer} unit normal to~$\Omega$.
As usual, we understand~\eqref{problem.ext} as the spectral problem for the self-adjoint operator $-\Delta_\aa^{\Omega^\ext}$ in $L^2(\Omega^\ext)$
associated via the first representation theorem~\cite[Thm.~VI.~2.1]{Kato} with the closed, densely defined, symmetric, and lower semi-bounded quadratic form
\begin{equation}\label{form}
	Q_\aa^{\Omega^\ext}[u] 
	:= 
        \int_{\Omega^\ext} |\nabla u|^2  			
	+\aa \int_{\partial\Omega} |u|^2
	\,, \qquad 
	\Dom(Q_\aa^{\Omega^\ext}) := W^{1,2}(\Omega^\ext)
	\,.
\end{equation}
The boundary term is understood in the sense of traces
$W^{1,2}(\Omega^\ext) \hookrightarrow L^2(\p\Omega)$
and represents a relatively bounded perturbation of 
the Neumann form~$Q_0^{\Omega^\ext}$
with the relative bound equal to zero.
Since~$\Omega$ is smooth, 
the operator domain of $-\Delta_\aa^{\Omega^\ext}$ 
consists of functions $u \in W^{2,2}(\Omega^\ext)$,
which satisfy the Robin boundary conditions from~\eqref{problem.ext} in the sense of traces and the operator acts as the distributional Laplacian 
(see, \eg, \cite[Thm.~3.5]{Behrndt-Langer-Lotoreichik-Rohleder} 
for the $W^{2,2}$-regularity).
We call $-\Delta_\aa^{\Omega^\ext}$
the \emph{Robin Laplacian} in~$\Omega^\ext$.
By the minimax principle, 
the number $\lambda_1^\alpha(\Omega^\ext)$ defined in~\eqref{Rayleigh} 
coincides with 
the lowest point in the spectrum of $-\Delta_\aa^{\Omega^\ext}$.

Let us now recall some qualitative spectral
properties of $-\Delta_\aa^{\Omega^\ext}$.
Since the boundary~$\partial\Omega$ is bounded, 
the essential spectrum coincides with the essential
spectrum of the Laplacian in the whole space
(\cf~\cite[Prop.~1]{KL1}):
\begin{equation}\label{spec.ess}
  \sess(-\Delta_\aa^{\Omega^\ext}) = [0,\infty)
  \,.
\end{equation}
If~$\alpha$ were non-negative, then the spectrum of $-\Delta_\aa^{\Omega^\ext}$
would be exhausted by the essential spectrum
and thus $\lambda_1^\alpha(\Omega^\ext)=0$.
Since~$\alpha$ is assumed to be negative, however, 
there may exist negative discrete eigenvalues,
depending on the largeness of~$\alpha$ as regards the dimension.
For low dimensions $d=1,2$ (\cf~\cite[Prop.~2]{KL1}), 
the number $\lambda_1^\alpha(\Omega^\ext)$ is negative
whenever~$\alpha$ is negative and represents therefore 
the lowest discrete eigenvalue of~$-\Delta_\aa^{\Omega^\ext}$.  
For dimensions $d \geq 3$, the coupling parameter~$\alpha$
must be sufficiently negative to make the discrete spectrum exist.
This claim is the content of Proposition~\ref{Prop.critical} above
that we prove now.

\begin{proof}[Proof of Proposition~\ref{Prop.critical}, case $d \geq 3$]
Let~$\phi$ be any test function from $C^\infty_0(\ov{\Omega^\ext})$,
a space dense in the form domain $\Dom(Q_\aa^{\Omega^\ext})$.

By the Gagliardo-Nierenberg-Sobolev-type inequality
	for exterior domains proven in~\cite[Prop.~5.1]{Lu-Ou_2005},
there exists a positive constant~$C$ such that, 
$$
  \int_{\Omega^\ext}|\nabla\phi(x)|^2 \, \dd x 
		\ge C\left (
				\int_{\Omega^\ext}|\phi(x)|^{\frac{2d}{d-2}} \, \dd x
			\right )^{\frac{d-2}{d}}.
$$ 
 Let
	us introduce an auxiliary weight function
	$\dR^d \ni x\mapsto \omega(x) :=  \exp(-|x|)$.
By the H\"older inequality, it follows
$$
   \int_{\Omega^\ext}|\nabla\phi(x)|^2 \, \dd x 
		\ge C \
  \frac{\dint[]{\Omega^\ext}|\phi(x)|^2 \, \omg(x) \, \dd x }
  {\left (\dint[]{\Omega^\ext} \, \omg(x)^{\frac{d}{2}}	\, \dd x\right )^{\frac{2}{d}}}
  = \frac{C d^2}{4\big(\Gamma(d) s_d\big)^{\frac{2}{d}}} 
  \dint[]{\Omega^\ext}|\phi(x)|^2 \, \omg(x) \, \dd x
  \,,
$$
where~$\Gamma$ denotes the Euler Gamma-function
and 
$
  s_d := |\partial B_1| = \frac{2\pi^{d/2}}{\Gamma(d/2)}
$ 
stands for the area of the unit sphere.
Note  that the obtained bound is a Hardy-type inequality for the Neumann Laplacian in $\Omega^\ext$. 

Now,  let $B_R\subset\dR^d$ be an open ball of a sufficiently large radius $R > 0$, so that $\ov{\Omega}\subset B_R$. Furthermore, we define the domain 
$\Omega_0^\ext := \Omega^\ext\cap B_R$.
Notice that the function $\omg$ is uniformly positive
in $\Omega_0^\ext$. Hence, the estimate above yields 
that there exists a positive constant~$c$ such that,
	\[
		\int_{\Omega^\ext}|\nabla\phi(x)|^2 \, \dd x 
		\ge 
		c \left (\int_{\Omega_0^\ext}|\nabla\phi(x)|^2\, \dd x 
		+ \int_{\Omega_0^\ext} |\phi(x)|^2 \, \dd x \right ) \,.
	\]
	Eventually, the trace theorem~\cite[Thm.~3.38]{McLean} 
	implies that there exists another positive constant~$c'$ such that,
	\[
		\int_{\Omega^\ext}|\nabla\phi(x)|^2 \, \dd x 
		\ge 
		c'
		\int_{\p\Omega}|\phi(x)|^2 \, \dd \s(x)
                \,.
	\]
Since~$\phi$ is an arbitrary function from the form core of~$-\Delta_\aa^{\Omega^\ext}$,
this inequality and~\eqref{spec.ess}
show that $\lambda_1^\alpha(\Omega^\ext)=0$
for all $\alpha \geq -c'$.
Consequently, the critical constant $\alpha_*(\Omega^\ext)$ 
for which a discrete eigenvalue emerges from the essential spectrum	
must be negative.

To show that $\alpha_*(\Omega^\ext)$ is actually finite 
and that the discrete spectrum exists for all $\alpha < \alpha_*(\Omega^\ext)$,
it is enough to notice the validity of the form ordering 
$
    Q_{\aa_1}^{\Omega^\ext}[\phi] \le
	Q_{\aa_2}^{\Omega^\ext}[\phi]
$ 
for $\aa_1 \le \aa_2$
and the continuity of $\aa\mapsto Q_{\aa}^{\Omega^\ext}[ \phi]$.
Consequently, the minimax principle then implies that 
$\aa\mapsto\Ev$ is non-decreasing and continuous.
Hence, the desired claim follows.
\end{proof}	

For $d=2$ it was shown in~\cite[Prop.~5]{KL1} that,
for all negative~$\alpha$,
\begin{equation}\label{monotonicity}
\mbox{$R \mapsto \EvB$ is strictly decreasing}. 
\end{equation}
Obviously, because of Proposition~\ref{Prop.critical},
the same monotonicity result cannot hold for \emph{all} negative~$\alpha$
if $d \geq 3$. 
In the higher dimensions, a monotonicity of
the critical coupling $\alpha_*(B_R^\mathrm{ext})$
for the exteriors of the balls~$B_R$ 
follows from Proposition~\ref{Prop.ball} above 
that we prove now.

\begin{proof}[Proof of Proposition~\ref{Prop.ball}]
	 In view of the radial symmetry
	of the problem,	the eigenfunction $u_1\in W^{1,2}(B_R^\ext)$
	of the Robin Laplacian in the exterior of the ball~$B_R$
corresponding to its lowest eigenvalue
	$\EvB < 0$, if it exists, must necessarily be radially symmetric as well. Using this simple observation we see that $\EvB = -k^2 < 0$ \iff the following ordinary differential spectral problem
	\begin{equation}\label{ODE}
		\Vast \{ 
		\begin{aligned}
			-r^{-(d-1)}\big[r^{d-1} \psi'(r)]' & = - k^2\psi(r)
				&& \mbox{for}\quad  r\in (R,\infty)\,,\\[0.3ex]
			\psi'(R) &= \aa\;\!\psi(R)\,,\\[0.3ex]
			\lim_{r\arr\infty}\psi(r) &= 0\,,
		\end{aligned}
	\end{equation}
	possesses a solution $(\psi, k)$ with
	$\psi\neq 0$ and $k > 0$; \cf~\cite[Sec.~3]{FK7}.
	Observe that the general solution of the differential equation in~\eqref{ODE}
with $k > 0$ is given by
	\[
		\psi(r) = 
		r^{-\nu}\big[C_1K_\nu(kr) +  C_2 I_\nu(kr)\big],
		\qquad 
		C_1,C_2 \in\dC,\qquad \nu := \frac{d-2}{2}\,,
	\]
	where $K_\nu(\cdot)$ and $I_\nu(\cdot)$ are modified Bessel functions of the second kind and order~$\nu$.
	Taking into account the required decay at infinity from~\eqref{ODE}
and using the asymptotic behaviour of $K_\nu(x)$ and $I_\nu(x)$ 
for large~$x$, see~\cite[Sec.~9.7.2]{Abramowitz-Stegun},
	we conclude that $C_2 = 0$.
Thus, the expression for $\psi$ simplifies to
$
		\psi(r) = C_1r^{-\nu}K_\nu(kr)
$,
where the constant~$C_1$ should be non-zero 
to get a non-trivial solution.
	Differentiating~$\psi$ with respect to $r$, 
we find
	\[
		\psi'(r) 
		= 
		-C_1\nu r^{-\nu - 1} K_\nu(kr) 
							+ C_1k r^{-\nu} K_\nu'(kr)\,.
	\]
	Thus, the boundary condition from~\eqref{ODE}
	yields the requirement
	\[
		-\nu R^{-\nu - 1} K_\nu(kR) 
				+ k R^{-\nu} K_\nu'(kR)
					-\aa R^{-\nu}K_\nu(kR) = 0\,.
	\]

	With the aid of the identity
	$K_\nu'(x) =- K_{\nu+1}(x) + \frac{\nu}{x} K_\nu(x)$
	(see \cite[Sec.~9.6.26]{Abramowitz-Stegun}),
	this scalar equation simplifies to
	\begin{equation}\label{eq:algebraic}
		\aa = -\frac{\nu}{R} + k \frac{K_\nu'(kR)}{K_\nu(kR)}
		=  
		-\frac{\nu}{R} + \frac{\nu}{R} - k \frac{K_{\nu+1}(kR)}{K_\nu(kR)}
		= 
		- kR \frac{K_{\nu+1}(kR)}{K_\nu(kR)} \frac{1}{R}.
	\end{equation}
	Introduce now $f(x) :=  x\frac{K_{\nu+1}(x)}{K_\nu(x)}$.
	The function $f$ is clearly continuous on $[0,\infty)$
	and the equation~\eqref{eq:algebraic} rewrites as
	$-\aa R = f(kR)$.
	Using the identities $2K_\nu' =- K_{\nu+1} - K_{\nu-1}$
	and $K_{\nu+1}(x) - K_{\nu-1}(x) = \frac{2\nu}{x}K_\nu(x)$
	(see \cite[Sec.~9.6.26]{Abramowitz-Stegun}), 
we find after lengthy but elementary
	computations that
(here the derivative is with respect to~$x$ 
and for brevity we omit the arguments of the functions)
	\[
	\begin{split}
		 f'
		& = 
		\frac{K_{\nu+1}}{K_\nu} +
		\frac{x}{K_\nu^2}\left(K_\nu K_{\nu+1}' - K_\nu' K_{\nu+1}\right)\\
		& =
		\frac{K_{\nu+1}}{K_\nu}+  \frac{x}{2K_\nu^2}\left(
		K_{\nu-1} K_{\nu+1} + K_{\nu+1}^2 - K_\nu^2 - K_\nu K_{\nu+2}\right)\\
		& =
		\frac{K_{\nu+1}}{K_\nu}
		+
		\frac{x}{2K_\nu^2}\left[
			K_{\nu+1}^2 - K_\nu^2 
			\right]\\
		& \qquad\quad +
		\frac{x}{2K_\nu^2}\left[
		\left(K_{\nu+1} - \frac{2\nu K_\nu}{x}\right) 
		K_{\nu+1} - K_\nu \left(K_\nu + 
		\frac{2(\nu+1)K_{\nu+1}}{x}\right)
		\right] \\
		& =
		-2\nu \frac{K_{\nu+1}}{K_\nu}
		+
		\frac{x}{K_\nu^2}\left[	K_{\nu+1}^2 - K_\nu^2\right] 
		 =
		\frac{x}{K_\nu^2}
		\left[
			-\frac{2\nu}{x} K_{\nu}K_{\nu+1}
			+
			K_{\nu+1}^2 - K_\nu^2
		\right]\\
		& =
		\frac{x}{K_\nu^2}
		\left[
			(K_{\nu-1} - K_{\nu+1}) K_{\nu+1}
			+
			K_{\nu+1}^2 - K_\nu^2
		\right]
		= 
		x
		\left[\frac{K_{\nu-1} K_{\nu+1}}{K_\nu^2} - 1\right] > 0\,,
	\end{split}	
	\]
	where the last step follows from the inequality in~\cite[Thm.~8]{Segura_2011}.
	We have thus shown that $f' > 0$ and hence the function~$f$  is strictly increasing.	In view of the asymptotic expansion 
	$K_\nu(x) \sim \frac{\Gamma(\nu)}{2} \left(\frac{2}{x}\right)^\nu$ as $x \arr 0^+$ (see \cite[Sec.~9.6.9]{Abramowitz-Stegun}), we obtain
	\begin{equation}\label{eq:lim1}
		\lim_{x\arr 0^+} f(x) 
		= 
		\frac{2^{\nu+1}\Gamma(\nu+1)}{2} 
		%\cdot
		\left(\frac{2^{\nu}\Gamma(\nu)}{2}\right)^{-1}
		= 
		2\frac{\Gamma(\nu+1)}{\Gamma(\nu)} 
		=
		2\nu\,.
	\end{equation}
	Using $K_\nu(x) \sim \left(\frac{\pi}{2x}\right)^{1/2} e^{-x}$
	as $x\arr\infty$ (see \cite[Sec.~9.7.2]{Abramowitz-Stegun}),
 we also find
	\begin{equation}\label{eq:lim2}
		\lim_{x\arr \infty} f(x) = \infty\,.
	\end{equation}
	Finally, combining monotonicity of $f$ and the limits~\eqref{eq:lim1},
	~\eqref{eq:lim2}, we conclude that the algebraic equation
	$-\aa R = f(k R)$ has at least one (and exactly one) solution $k > 0$ \iff $\aa < -\frac{2\nu}{R} = -\frac{d-2}{R}$. 
\end{proof}	

%

%---------------------------------------------------------------%

\section{Planar domains}\label{Sec.2D}

\subsection{Proof of Theorem~\ref{Thm1}}\label{Sec.proof}
%---------------------------------------------------------------%
%
Let $N := N_\Omega\in\dN$ be the number
	of connected components of the domain 
	$\Omega = \cup_{n=1}^N \Omega_n\subset\dR^2$,
where $\Omega_n\subset\dR^2$ are bounded, smooth, simply connected domains 
and $\ov{\Omega_n}\cap\ov{\Omega_m} = \varnothing$
if $n\ne m$. Let $\kp_n\colon\p\Omega_n\arr\dR$ 
be the curvature of the curve $\p\Omega_n$,
with the sign convention that $\kappa_n$ is non-positive if~$\Omega_n$ is convex. 
By \cite[Cor.~2.2.2]{Kli}, we have $\int_{\p\Omega_n} \kp_n = -2\pi$ for all $n = 1,\dots, N$. 
According to the constraint in the formulation of the
	theorem, we also have $\frac{|\p\Omega|}{N} = |\p B_R| =  c > 0$.

Let $\rho\colon \Omega^\ext\mapsto (0,+\infty)$ be the distance
function from the boundary $\p\Omega$ of $\Omega$. 
Furthermore, we define one more auxiliary function by
\[
	A \colon [0,\infty) \arr [0,\infty) ,\qquad 
	A(r) := \big|\{x\in\Omega^\ext \colon \rho(x) < r\}\big|.
\]
Note that the value $A(r)$ is simply the area of the sub-domain of $\Omega^\ext$ 
which consists of the points located 
at a distance less than~$r$ from the boundary~$\p\Omega$. According to~\cite[Prop. A.1] {Savo_2001}, the function $A(r)$ is
	locally Lipschitz continuous and thus
	differentiable almost everywhere.

For a Lipschitz continuous, 
compactly supported $\phi \colon [0,\infty)\arr\dC$,
we introduce the compositions 
$u := \phi \circ A \circ \rho\colon \ov{\Omega^\ext}\arr\dC$ 
and $\psi := \phi\circ A\colon [0,\infty)\arr\dC$.
Hence, $u$ and $\psi$ are Lipschitz continuous
and compactly supported in $\ov{\Omega^\ext}$ and
in $[0,\infty)$, respectively.  In particular,
we have	$u\in W^{1,2}(\Omega)$ and
\[
\begin{split}
	\|\nabla u\|^2_{L^2(\Omega;\dC^2)} 
	& 
	= 
	\int_0^\infty |\phi'(A(r))|^2 (A'(r))^3 \, \dd r 
	= 
	\int_0^\infty |\psi'(t)|^2 A'(t) \, \dd t\,,\\[0.6ex]
	\|u\|^2_{L^2(\Omega)} & 
	= 
	\int_0^\infty |\phi(A(r))|^2A'(r) \, \dd r
	= 
	\int_0^\infty |\psi(t)|^2 A'(t) \, \dd t\,,\\[0.6ex]
	\| u \|^2_{L^2(\p\Omega)} & 
	= 
	 |\p\Omega| \, |\phi(0)|^2 
	= 
	|\p\Omega| \, |\psi(0)|^2 \,.
\end{split}
\]
The last formula of the above three is almost obvious. The first and the second formulae are consequences of the co-area formula
and their complete derivation can be found in~\cite[App.~1]{Savo_2001}
(see also~\cite[Sec.~4]{FK7}).

Furthermore, we observe using geometric isoperimetric
inequality and~\cite[Prop.~A.1\,(iv)]{Savo_2001}
(see also~\cite{Nagy_1959}) that
\[
\begin{aligned}
	(4\pi|\Omega|)^{1/2}
	\le A'(r) = 
	\left|\{x\in\Omega^\ext \colon \rho(x) = r\}\right|
	&\le |\p\Omega| - r\sum_{n=1}^N \int_{\p\Omega_n} \kp_n(s) \, \dd s 
        \\
	&= 
	|\p\Omega| + 2\pi N r
   \,.
\end{aligned}
\]
Note that any
$\psi \in C^\infty_0([0,\infty))$
can be represented as a composition $\phi \circ A$ 
with a Lipschitz continuous, compactly
supported $\phi = \psi \circ A^{-1}
 \colon [0,\infty)\arr\dC$,  where $A^{-1}$ stands 
 for the function inverse to $A$.
Thus, by the minimax principle~\cite[Sec.~XIII.1]{RS4} applied
for the quadratic form $Q^{\Omega^\ext}_\aa$
on the subspace of Lipschitz continuous functions, compactly supported in the closure of $\Omega^\ext$ and
depending on the distance from its boundary only, we get
\[
\begin{split}
	\Ev	
	&
	\le 
	\inf_{\begin{smallmatrix}\psi\in C^\infty_0([0,\infty))\\ \psi \ne 0 \end{smallmatrix}}
		\frac{\dint[\infty]{0} |\psi'(t)|^2 ( |\p \Omega| + 2\pi N t) \, \dd t+ \aa \, |\p \Omega| |\psi(0)|^2}{\dint[\infty]{0}|\psi(t)|^2 (|\p \Omega|+2\pi N t) \, \dd t}\\
	& =
	\inf_{\begin{smallmatrix}\psi\in C^\infty_0([0,\infty))\\ \psi \ne 0 \end{smallmatrix}}
	\frac{\dint[\infty]{0} |\psi'(t)|^2\left(\frac{|\p \Omega|}{N}+2\pi t\right) \dd t+ \aa \, \frac{|\p \Omega|}{N} \, |\psi(0)|^2}{\dint[\infty]{0} |\psi(t)|^2\left (\frac{|\p \Omega|}{N}+2\pi t \right)  \dd t}
  \\
	&=
	\inf_{\begin{smallmatrix}\psi\in C^\infty_0([0,\infty))\\ \psi \ne 0 \end{smallmatrix}}
	\frac{\dint[\infty]{0} |\psi'(t)|^2\left(|\p B_R|+2\pi t\right) \dd t+ \aa \, |\p B_R| \, |\psi(0)|^2}{\dint[\infty]{0} |\psi(t)|^2\left (
		|\p B_R| +2\pi t \right)  \dd t} 
        \\ 
        &= 
	 \EvB\,,
\end{split}	
\]
where in the last step we implicitly employed that $\EvB < 0$ 
and that the eigenfunction of the Robin Laplacian $\OpB$ 
corresponding to $\EvB$ is radially symmetric.
\qed

\subsection{Large coupling for a union of identical disjoint disks}
\label{ssec:ex1}
The result of Theorem~\ref{Thm1} can be viewed
as an inequality 
\begin{equation}\label{eq:ineqThm1}
	\Ev \le \EvB\,,\qquad \forall\, \aa < 0\,,
\end{equation}
for $\Omega\subset\dR^2$ being
a smooth, bounded open set consisting of 
$N\in\dN$ disjoint simply connected components and satisfying
the relation $\frac{|\p\Omega|}{N} = |\p B_R| = c > 0$. In this
section we argue why the inequality~\eqref{eq:ineqThm1}
can not be improved for a subclass of domains 
with prescribed number of connected components.

To this aim fix a positive number~$r$ and a discrete set of points
$X := \{x_n\}_{n=1}^N \subset \Real^2$
such that $|x_n - x_m| > r$ for any $n\ne m$.
Let us consider the planar set
\[
\Omega := \bigcup_{n=1}^N B_r(x_n)  
\,,
\]	 
where $B_r(x_n)$ denotes the open disk of radius~$r$ centered at~$x_n$.  
Clearly, we have $N_\Omega = N$, $c = \frac{|\p\Omega|}{N} =   2\pi r$,
and the curvature of $\p\Omega$ equals $-1/r$ pointwise.
Using the large coupling asymptotics given 
in~\cite[Cor.~1.4]{Pankrashkin-Popoff_2016},
we have 
\[
\begin{aligned}
	\lm_1^\aa(\Omega^\ext) & 
		= -\aa^2 -\frac{\aa}{r} +  o(\aa),
	\qquad
	&\aa\arr -\infty\,,\\
	\lm_1^\aa(B^\ext_{R})& 
		= -\aa^2 -\frac{\aa}{R} + o(\aa),
	\qquad
	&\aa\arr -\infty\,.
\end{aligned}	
\]
We conclude that for $R > r$ the ``reversed'' inequality
$\Ev >\EvB$ holds for all $\aa < 0 $ with $|\aa|$ large enough. 
Hence, the inequality~\eqref{eq:ineqThm1} is in general not valid
for $|\p B_R| > c$ for a subclass of domains with
	exactly~$N$ connected components.

\subsection{Proof of Corollary~\ref{Cor}}
The optimisation result under the fixed perimeter constraint immediately
follows from Theorem~\ref{Thm1}. In order
to show the optimisation result under fixed area constraint,
we first observe that, by the isoperimetric result,
$\Ev \le \lm_1^\aa(B_{R_2}^\ext)$
if $|\p\Omega| = |\p B_{R_2}|$. Next, note that by the geometric isoperimetric inequality for the ball $B_{R_1}$ satisfying $|\Omega| = |B_{R_1}|$ we have $R_1 \le R_2$ 
and thus by the strict monotonicity~\eqref{monotonicity}
we get, for any negative~$\aa$,
\begin{flalign*}
	&&\Ev \le \lm_1^\aa(B_{R_2}^\ext) \le \lm_1^\aa(B_{R_1}^\ext)\,.&&\qed
\end{flalign*}

\section{Domains in higher space-dimensions}\label{Sec.nD}

\subsection{Higher order mean curvatures}
Let $\Omega\subset\dR^d$, with $d \ge 3$, be a bounded
smooth domain. Recall that $\kp_1,\kp_2,\dots, \kp_{d-1}$ 
denote the principal curvatures of $\p\Omega$. 
They are defined locally as eigenvalues of
the \emph{Weingarten tensor} $\mathcal{W} := \der n$,
where~$n$ is the outer unit normal to~$\Omega$ in our convention.
Consequently, the principal curvatures are non-negative if~$\Omega$ is convex.

Given $j \in \{0,1,\dots,d-1\}$,
let  $M_j$ be the \emph{$j^\mathrm{th}$-order mean curvature}, 
normalised so that the \emph{symmetric function of the principal curvatures}
$F_{\p\Omega}\colon \p\Omega\times [0,\infty) \arr \dR$
can be expanded as follows:
\begin{equation}\label{eq:sfpc}
F_{\p\Omega}(s,t) 
:=
\prod_{j=1}^{d-1} \big(1 + t \, \kp_j(s) \big) 
=
\sum_{j=0}^{d-1}\big( 
\begin{smallmatrix} d -1 \\ j \end{smallmatrix}\big) 
\, M_j(s) \, t^j .
\end{equation}
Thus, $M_0 = 1$, 
$M_1 = M$ is the usual mean curvature introduced already in~\eqref{eq:mean}, 
and $M_{d-1} = \prod_{j=1}^{d-1}\kp_j$ is the \emph{Gauss-Kronecker curvature}. 
While the principal curvatures $\kappa_1, \dots, \kappa_{d-1}$
are defined only locally,
the invariants $M_1,\dots,M_{d-1}$ are globally defined functions.

The averaged $j^\mathrm{th}$-order integral of the mean curvature is defined by
\[
	\cM_j(\p\Omega) 
	:= 
	\frac{1}{|\p\Omega|} \int_{\p\Omega} M_j(s) \, \dd \s(s)\,,
\]
for $j\in \{1,\dots,d-1\}$. 
By~\cite[\S 4.2]{Sch} we have
$\cM_{d-1}(\p\Omega) = \frac{s_d}{|\p\Omega|}$ 
for any convex set~$\Omega$, 
where~$s_d$ as usual denotes the area of the unit sphere in $\dR^d$. Note also that $\cM_j(\p B_R) = R^{-j}$ for all $j=1,\dots,d-2$.

\subsection{Proof of Theorem~\ref{Thm2}}
Let $\Omega\subset\dR^d$, $d \ge 3$, be a bounded, convex smooth domain and let $B_R\subset\dR^d$ be a ball such that
$\cM(\p\Omega) = \cM(\p B_R) = R^{-(d-1)}$.
In the case that $\EvB = 0$
the claim obviously holds
and we assume without loss of generality that
$\aa < 0$ and $R > 0$ are such that $\EvB < 0$.
We split the proof into four steps.

\medskip
\underline{\emph{Step 1.}}
For any $j \in \{1,\dots, d-2\}$, 
Maclaurin's inequality~\cite[Ineq.~52]{HLP}
yields that $M_j \le M^j$ holds pointwise.
Hence, for $j \in \{1,\dots, d-2\}$ 
we get using Jensen's inequality applied to
the concave function $[0,\infty) \ni x\mapsto x^{\frac{j}{d-1}}$ 
\[
\begin{split}
	\cM_j(\p\Omega) & = 
	\frac{1}{|\p\Omega|} \int_{\p\Omega} M_j(s) \, \dd \s(s)\\
	& \le 
	\frac{1}{|\p\Omega|} 
	\int_{\p\Omega} (M(s))^j \, \dd \s(s)\\
	& \le 
	\left (\frac{1}{|\p\Omega|} 
	\int_{\p\Omega} (M(s))^{d-1} \, \dd \s(s)
	\right )^{\frac{j}{d-1}}\\
	& 
	= (\cM(\p\Omega))^{\frac{j}{d-1}}
	= (\cM(\p B_R))^{\frac{j}{d-1}}
	= \cM_j(\p B_R)\,.
\end{split}
\]
In particular, we have shown that
\begin{equation*} 
	\cM_1(\p\Omega) \le \cM_1(\p B_R) = \frac{1}{R}\,.
\end{equation*}

\medskip

\underline{\emph{Step 2.}}
In this step we show that $|\p\Omega| \ge |\p B_R|$. 
Using~\cite[Eq.~17 in \S~19.3, Rem.~19.3.4]{BuZa} and the Alexandrov-Fenchel inequality~\cite[\S~20.2, Eq.~20]{BuZa} we obtain
\[
	\cM_1(\p\Omega)
	 \ge 
	 \left (
	 \frac{s_d}{|\p\Omega|} \right )^{\frac{1}{d-1}} \,.
\]
Hence, the inequality $\cM_1(\p\Omega) \le R^{-1}$ 
(shown in Step 1) yields
\[
	|\p\Omega| 	\ge  s_d R^{d-1}
	=
	|\p B_R|\,.
\]

\medskip
\underline{\emph{Step 3.}}
%Recall that $\cM_{d-1} = 1$.
%Recall that the ground-state eigenfunction of
%$-\Delta^{B_R^\ext}_\aa$ is radially symmetric, if it exists.
%
Integrating the symmetric functions $F_{\p\Omega}$
and $F_{\p B_R}$ of the principal
curvatures defined as in~\eqref{eq:sfpc} over $\p\Omega$
and $\p B_R$, respectively,
we obtain \emph{Steiner-type polynomials}
\[
\begin{split}
	P_{\p\Omega}(t) 
	& := 
	\int_{\p \Omega} \frac{F_{\p\Omega}(s,t)}{|\p\Omega|} \, \dd \s(s) 
	 = 
	1 + \sum_{j=1}^{d-2} \big(\begin{smallmatrix} d -1 \\ j \end{smallmatrix}\big) \cM_j(\p\Omega) \, t^j+ \frac{s_d \, t^{d-1}}{|\p\Omega|}\,, \\
	P_{\p B_R}(t) 
	& := 
	\int_{\p B_R} \frac{F_{\p B_R}(s,t)}{|\p B_R|} \, \dd \s(s) 
	= 
	1 + \sum_{j=1}^{d-2} \big(\begin{smallmatrix} d -1 \\ j \end{smallmatrix}\big) \cM_j(\p B_R) \, t^j+ \frac{s_d \, t^{d-1}}{|\p B_R|}\,.
\end{split}	
\]
Further, using the inequalities 
$\cM_j(\p\Omega)\le \cM_j(\p B_R)$, $j=1,\dots,d-2$, and $|\p B_R| \le |\p\Omega|$
(shown in Steps 1 and 2) we obtain
\begin{equation}\label{eq:Pineq}
	P_{\p\Omega}(t) \le P_{\p B_R}(t)\,,\qquad \forall\, t >0\,.
\end{equation}

\medskip
\underline{\emph{Step 4.}}	
	Next, we parameterise $\Omega^\ext$
	by means of the \emph{parallel coordinates}
	\begin{equation}\label{tube} 
		\cL \colon \p\Omega \times (0,\infty) \to \Omega^\ext :
		\left\{(s,t) \mapsto s + n(s) \, t \right\}	\,,
	\end{equation}
	where~$n$ is the outer unit normal to~$\Omega$ as above. 
	Notice that~$\cL$ is indeed a global diffeomorphism
	because of the convexity and smoothness assumptions;
	\cf~\cite[Sec. 4]{KL1}.
	The metric induced by~\eqref{tube} 
	acquires a block form
	\begin{equation}\label{metric} 
		\dd\cL^2 = g \circ (I + t\, \cW)^2 + \dd t^2
		\,,
	\end{equation}
	where~$g$ is the Riemannian metric of~$\partial\Omega$
	and~$\cW=\der n$ is the Weingarten tensor introduced above,
	\cf~\cite[Sec.~2]{KRT} for details.
	Hence, $\Omega^\ext$ can be identified 
	with the product manifold $\p\Omega \times (0,\infty)$	equipped with the metric~\eqref{metric}.
	Consequently, the Hilbert space $L^2(\Omega^\ext)$ 
	can be identified with 
	\[
		\cH :=
		L^2\big(\p\Omega \times (0,\infty),
		F_{\p\Omega}(s,t) 
		\, \dd \s(s) \, \dd t\big)\,,
	\]
	via the unitary transform
	\begin{equation}\label{eq:U}
		\sfU \colon L^2(\Omega^\ext) \to \cH \colon
		\{u \mapsto u \circ \cL \}
	\,.
	\end{equation}
	It is natural to introduce the unitarily equivalent operator
	$\sfH_\aa := \sfU (\Op) \sfU^{-1}$,
	associated with the transformed
	quadratic form
	$\sfh_\aa[v] :=  Q_\aa^{\Omega^\ext}[\sfU^{-1}v]
	$, $\Dom(\sfh_\aa) = \sfU(W^{1,2}(\Omega^\ext))$.	 	
	Applying the minimax principle~\cite[Sec.~XIII.1]{RS4} 
	for the quadratic form $\sfh_\aa$
	on functions in
	$C^\infty_0(\p\Omega\times [0,\infty))$
	dependent on $t$-variable only we get
	\[
\begin{split}
	\Ev 
	& \le 
	\inf_{\begin{smallmatrix}\psi \in C^\infty_0([0,\infty))\\
		\psi \ne 0
		\end{smallmatrix}}
	\frac{\dint[]{\p\Omega}\dint[\infty]{0} |\psi'(t)|^2F_{\p\Omega}(s,t) \,\dd t \,\dd \s(s) + \aa \, |\p\Omega| |\psi(0)|^2}
	{\dint[]{\p\Omega}\dint[\infty]{0} |\psi(t)|^2
		F_{\p\Omega}(s,t) \, \dd t \, \dd \s(s)}\\[0.6ex]
	& 
	=  
	\inf_{\begin{smallmatrix}\psi \in C^\infty_0([0,\infty))\\
		\psi \ne 0
		\end{smallmatrix}}
	\frac{\dint[\infty]{0}  |\psi'(t)|^2 P_{\p\Omega}(t) \, \dd t + \aa \, |\psi(0)|^2}
	{\dint[\infty]{0}  |\psi(t)|^2P_{\p\Omega}(t) \, \dd t}\\[0.6ex]
	& \le
	\inf_{\begin{smallmatrix}\psi \in C^\infty_0([0,\infty))\\
		\psi \ne 0
		\end{smallmatrix}}
	\frac{\dint[\infty]{0}  |\psi'(t)|^2 P_{\p B_R}(t) \, \dd t + \aa \, |\psi(0)|^2}
	{\dint[\infty]{0}  |\psi(t)|^2P_{\p B_R}(t) \, \dd t} \\
	&= 
	\EvB\,,
\end{split}
\]
where we have applied the inequality~\eqref{eq:Pineq} and used in between  that $\EvB < 0$
and that the eigenfunction of $\OpB$ corresponding to $\EvB$ is radially symmetric.	

\subsection{A connection with large coupling asymptotics}
The result of Theorem~\ref{Thm2} can be seen
as the inequality
\begin{equation}\label{eq:ineqThm2}
	\Ev \le \EvB\,,\qquad \forall\, \aa < 0\,,
\end{equation}
for $\Omega\subset\dR^d$, $d\ge 3$, being
a smooth, bounded convex open set 
satisfying
the relation $\cM(\p\Omega) = \cM(\p B_R)$
with $\cM(\cdot)$ defined as in~\eqref{total}. 
Assuming that $\Omega$ and $B_R$ are not
congruent, the constraint $\cM(\p\Omega) = \cM(\p B_R)$
and the main result of~\cite{Alexandrov-62} imply that
\begin{equation*}%\label{key}
	\delta := \min_{s\in\p\Omega} M(s) - R^{-1} < 0\,.
\end{equation*}
Hence, by~\cite[Cor. 1.4]{Pankrashkin-Popoff_2016}
we have
\begin{equation*}%\label{key}
	\EvB - \Ev = \aa\delta\,(d-1) + o(\aa)\,,
	\qquad\aa \arr -\infty\,.
\end{equation*}
Informally speaking, the ``gap'' in the isoperimetric
inequality~\eqref{eq:ineqThm2} grows 
asymptotically linearly in $|\aa|$
as $\aa\arr  -\infty$.

%\section{Conclusions}
%
%Let us conclude the paper by several comments on and examples
%related to our results. 
%
%
%\subsection{Sharpness of Theorem~\ref{Thm2}???}

\subsection*{Acknowledgments}
The research of D.K.\ was partially supported by FCT (Portugal)
through project PTDC/MAT-CAL/4334/2014.
V.L.\ acknowledges the support by the grant No.~17-01706S of
the Czech Science Foundation (GA\v{C}R).

%\newpage
%\vfill
%--------------%
% BIBLIOGRAPHY %
%--------------%
%
%\addcontentsline{toc}{section}{References}
\bibliography{bib16}
\bibliographystyle{amsplain}

\end{document}